\documentclass[12pt]{amsart}

\usepackage{graphicx}
\usepackage{subfigure}
\usepackage{eepic}
\usepackage{xcolor}
\selectcolormodel{gray}
\usepackage{tikz}
\usepackage{amsmath}
\usepackage{a4wide}
\usepackage[utf8]{inputenc}
\usepackage{amssymb}
\usepackage{amsopn}
\usepackage{epsfig}
\usepackage{amsfonts}
\usepackage{latexsym}
\usepackage{amsthm}
\usepackage{enumerate}
\usepackage[UKenglish]{babel}
\usepackage{verbatim}
\usepackage{color}

\allowdisplaybreaks[1]


\newtheorem{theorem}{Theorem}[section]
\newtheorem{lemma}[theorem]{Lemma}
\newtheorem{proposition}[theorem]{Proposition}

\theoremstyle{definition}

\newtheorem{question}[theorem]{Question}
\theoremstyle{remark}

\numberwithin{equation}{section}

\newcommand{\R}{\ensuremath{\mathbb{R}}}

\newcommand{\N}{\ensuremath{\mathbb{N}}}

\renewcommand{\O}{\mathcal{O}}

\renewcommand{\L}{\mathcal{L}}

\begin{document}

\title[Equidistribution results for sequences of polynomials]{Equidistribution results for sequences of polynomials}

\author{Simon Baker}
\address{Simon Baker:  School of Mathematics, University of Birmingham, Birmingham, B15 2TT, UK}
\email{simonbaker412@gmail.com}

\date{\today}

\subjclass[2010]{11K06}

\begin{abstract}
Let $(f_n)_{n=1}^{\infty}$ be a sequence of polynomials and $\alpha>1$. In this paper we study the distribution of the sequence  $(f_n(\alpha))_{n=1}^{\infty}$ modulo one. We give sufficient conditions for a sequence $(f_n)_{n=1}^{\infty}$ to ensure that for Lebesgue almost every $\alpha>1$ the sequence $(f_n(\alpha))_{n=1}^{\infty}$ has Poissonian pair correlations. In particular, this result implies that for Lebesgue almost every $\alpha>1$, for any $k\geq 2$ the sequence $(\alpha^{n^k})_{n=1}^{\infty}$ has Poissonian pair correlations. 


\end{abstract}
   
\keywords{Uniform distribution, Poissonian pair correlations.}
\maketitle

\section{Introduction}\label{sec:1}
Given a sequence of real numbers of some number theoretic or dynamical origin, describing its distribution modulo one is a classical problem (see for example \cite{Bug1, Bug, KuiNi} for more on this topic). One approach for describing the distribution of a sequence modulo one is to ask whether it is uniformly distributed. In what follows we let $\{\cdot\}$ denote the fractional part of a real number and $\|\cdot\|$ denote the distance to the nearest integer. We say that a sequence $(x_n)_{n=1}^{\infty}$ is uniformly distributed modulo one if for every pair of real numbers $u,v $ with $0\leq u <v\leq 1$ we have $$\lim_{N\to\infty}\frac{\#\{1\leq n \leq N:\{x_n\}\in [u,v]\}}{N}=v-u.$$ In recent years there has been much interest in a new approach for describing the finer distributional properties of a sequence modulo one. We say that a sequence $(x_n)_{n=1}^{\infty}$ has Poissonian pair correlations if for all $s>0$ we have $$\lim_{N\to\infty}\frac{\#\{1\leq m\neq n\leq N: \|x_n-x_m\|\leq \frac{s}{N}\}}{N}=2s.$$ The original motivation for investigating whether a sequence has Poissonian pair correlations comes from a connection with quantum physics. For certain quantum systems the discrete energy spectra has the form $(\{a_n\alpha\})_{n=1}^{\infty}$ where $\alpha$ is a constant and $(a_n)_{n=1}^{\infty}$ is a sequence of integers. The Berry-Tabor conjecture states that the discrete energy spectrum has Poissonian pair correlations except for in certain degenerate cases. This connection inspired several important contributions due to Rudnick, Sarnak, and Zaharescu, see \cite{RS,RSZ,RZ}. We refer the reader to \cite{AAL} and the references therein for more on this connection between quantum physics and the Poissonian pair correlation property. 


Much of the recent interest surrounding whether a sequence $(a_n\alpha)_{n=1}^{\infty}$ has Poissonian pair correlations comes from a connection with additive combinatorics, and more specifically with the so called additive energy of a sequence $(a_n)_{n=1}^{\infty}$. This connection was initially observed by Aistleitner et al in \cite{ALL} and subsequently pursued by several authors. For more on this connection we refer the reader to the survey of Larcher and Stockinger \cite{LS} and the references therein. We remark that the sequence $(n\alpha)_{n=1}^{\infty}$ does not have Poissonian pair correlations for any $\alpha\in \R$. This fact can be seen as a consequence of the three gap theorem. For a short proof of this fact we refer the reader to the aforementioned survey of Larcher and Stockinger \cite{LS}.

An interesting family of sequences is obtained by considering $(\alpha^n)_{n=1}^{\infty}$ for $\alpha>1.$ The main source of motivation behind the present work is a desire to obtain a thorough description of the distribution of these sequences. More generally, we are interested in taking a sequence of polynomials $(f_n)_{n=1}^{\infty},$ a real number $\alpha>1,$ and studying the distribution of the sequence $(f_n(\alpha))_{n=1}^{\infty}$ modulo one. 

The study of the distributional properties of $(\alpha^n)_{n=1}^{\infty}$ modulo one dates back to work of Hardy. In \cite{Hardy} he proved that if $\alpha$ is an algebraic number and $\lim_{n\to\infty}\|\alpha^n\|= 0$ then $\alpha$ is a Pisot number. This result was later obtained independently by Pisot in \cite{Pisot2}. Recall that we say a real number $\alpha>1$ is a Pisot number if it is an algebraic integer whose Galois conjugates all have modulus strictly less than one. Pisot had previously shown in \cite{Pisot} that there are at most countably many $\alpha>1$ satisfying $\lim_{n\to\infty}\|\alpha^n\|=0.$ It is an long-standing open question to determine whether there exist any transcendental numbers satisfying $\lim_{n\to\infty}\|\alpha^n\|=0.$ The main result of this paper builds upon the following theorem due to Koksma. 


\begin{theorem}[\cite{Koks}]
	\label{Koksma uniformly distributed}
For Lebesgue almost every $\alpha>1$ the sequence $(\alpha^n)_{n=1}^{\infty}$ is uniformly distributed modulo one.
\end{theorem}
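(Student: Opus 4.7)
The plan is to apply Weyl's criterion, which reduces the claim to showing that, for each nonzero $h \in \mathbb{Z}$ and Lebesgue-almost every $\alpha > 1$,
\[ \frac{1}{N}\sum_{n=1}^N e^{2\pi i h \alpha^n} \longrightarrow 0 \quad \text{as } N\to\infty. \]
Since a countable intersection of full-measure sets is full-measure, it suffices to prove this for each $h$ separately and on a fixed compact interval $[a,b] \subset (1,\infty)$; one then covers $(1,\infty)$ by countably many such intervals.

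I would then establish a second-moment bound on $[a,b]$. Setting $S_N(\alpha) = \sum_{n=1}^N e^{2\pi i h \alpha^n}$, expanding $|S_N|^2$ yields a diagonal contribution of $(b-a)N$ together with oscillatory integrals $\int_a^b e^{2\pi i h(\alpha^n - \alpha^m)}\, d\alpha$ for $m \neq n$. For $n > m$ the derivative of the phase is $h(n\alpha^{n-1} - m\alpha^{m-1})$, and a short computation using $\alpha \geq a > 1$ shows this is bounded below in absolute value by a constant (depending on $a$) times $|h|\, n a^{n-1}$. A single integration by parts then gives
\[ \left| \int_a^b e^{2\pi i h(\alpha^n - \alpha^m)}\, d\alpha \right| \ll \frac{1}{|h|\, n a^{n-1}}, \]
and summing over $1 \leq m < n \leq N$ produces a uniformly convergent geometric series. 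Combining, I obtain
\[ \int_a^b \left|\frac{S_N(\alpha)}{N}\right|^2 d\alpha = O\!\left(\frac{1}{N}\right), \]
with implicit constant depending on $a,b,h$.

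Since $O(1/N)$ is not summable, I would pass to the subsequence $N_k = k^2$. Monotone convergence together with $\sum_k 1/k^2 < \infty$ then forces $S_{N_k}(\alpha)/N_k \to 0$ for almost every $\alpha \in [a,b]$. To interpolate for $N_k \leq N < N_{k+1}$, the trivial bound $|S_N - S_{N_k}| \leq N_{k+1} - N_k = O(\sqrt{N_k})$ shows that $S_N/N$ and $S_{N_k}/N_k$ differ by $o(1)$, giving convergence along the full sequence. Intersecting the resulting full-measure sets over $h \in \mathbb{Z}\setminus\{0\}$ and over a countable exhaustion of $(1,\infty)$ by intervals $[a,b]$ with $a > 1$ completes the argument.

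The main obstacle is the integration-by-parts estimate: the whole argument hinges on a uniform lower bound for the phase derivative that remains summable in $n$, and this lower bound degenerates as $a \to 1^+$. This degeneration is not merely a technical inconvenience but the essential reason the conclusion is stated almost-everywhere rather than for every $\alpha > 1$; pinning down the behaviour of $(\alpha^n) \pmod 1$ for a specific transcendental $\alpha$ is, as the discussion preceding the theorem indicates, well beyond the reach of this second-moment method.
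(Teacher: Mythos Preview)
The paper does not supply its own proof of this theorem: Theorem~1.1 is stated as Koksma's classical result and attributed to \cite{Koks}, with no argument given. So there is nothing in the paper to compare your attempt against directly.

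That said, your sketch is a correct outline of the standard proof. The derivative lower bound you need is genuine: for $n>m$ and $\alpha\in[a,b]$ one has
\[
n\alpha^{n-1}-m\alpha^{m-1}=\alpha^{m-1}\bigl(n\alpha^{n-m}-m\bigr)\geq a^{m-1}\cdot\frac{a-1}{a}\,n a^{n-m}=\frac{a-1}{a}\,n a^{n-1},
\]
and the phase is convex, so the first-derivative (van der Corput) estimate applies and gives the $O\bigl((|h|na^{n-1})^{-1}\bigr)$ bound. Summation over $m<n$ is then a geometric series in $n$, yielding $\int_a^b|S_N/N|^2\,d\alpha=O(1/N)$; the passage to the subsequence $N_k=k^2$ and the trivial interpolation are routine.

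It is worth noting that this second-moment-plus-square-subsequence strategy is precisely the template the paper uses for its own main result: Proposition~2.2 establishes the analogous $O(1/N)$ variance bound for the pair-correlation counting function, and the deduction of Theorem~1.2 from it (Markov, Borel--Cantelli along $N^2$, interpolation) mirrors your argument line for line. In that sense your proof of Koksma's theorem is the natural ``warm-up'' for what the paper actually does.
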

For some recent results on the distribution of the sequence $(\alpha^n)_{n=1}^{\infty}$ we refer the reader to \cite{A,Bak,BLR, BugMos,Dub,Kah}, \cite[Chapters 2 and 3]{Bug}, and the references therein.

In \cite{ALP} it was shown that if a sequence has Poissonian pair correlations then it is uniformly distributed modulo one. Observe that by our earlier remarks regarding the sequence $(n\alpha)_{n=1}^{\infty},$ and the well known fact that $(n\alpha)_{n=1}^{\infty}$ is uniformly distributed if $\alpha$ is irrational, it follows that having Poissonian pair correlations is a stronger property than being uniformly distributed. With this observation and Theorem \ref{Koksma uniformly distributed} in mind, the following question naturally arises.

\begin{question}
	\label{question}
	Is it true that for Lebesgue almost every $\alpha>1$ the sequence $(\alpha^n)_{n=1}^{\infty}$ has Poissonian pair correlations?
\end{question}
In this paper we do not answer this question. It is worth mentioning that after completion of this paper, the author and Christoph Aistleitner were able to answer Question \ref{question} in the affirmative, see \cite{AB}. The arguments used in this paper rely on a second moment method and good estimates on the Lebesgue measure of certain sets. The arguments used in \cite{AB} combine some of the techniques introduced in this paper with a martingale approach and techniques from Fourier Analysis. The results from \cite{AB} do not imply either Theorem \ref{Main thm} or Theorem \ref{example}, which are the main results of this paper. 
	
	The main result of this paper is the following general theorem which gives sufficient conditions for a sequence of polynomials $(f_n)_{n=1}^{\infty}$ to ensure that for Lebesgue almost every $\alpha>1$ the sequence $(f_n(\alpha))_{n=1}^{\infty}$ has Poissonian pair correlations. 

\begin{theorem}
\label{Main thm}
Suppose $(f_{n})_{n=1}^{\infty}$ is a sequence of polynomials satisfying the following properties:
\begin{enumerate}
	\item The sequence $(\deg(f_n))_{n=1}^{\infty}$ is strictly increasing.
	\item For any $n_2>n_1$ the function $f_{n_2}-f_{n_1}:(1,\infty)\to\mathbb{R}$ is strictly increasing and convex. 
	\item For any $[a,b]\subset (1,\infty)$, there exists $c_{a,b}>0$ such that for any $\alpha\in[a,b]$ and $n_2>n_1$ we have $$(f_{n_2}-f_{n_1})'(\alpha)\geq c_{a,b}\deg(f_{n_2})\alpha^{\deg(f_{n_2})}.$$

	\item For any $[a,b]\subset (1,\infty)$, there exists $C_{a,b}>1$ such that for any $\alpha\in[a,b]$ and $n_2>n_1$ we have $$\frac{\alpha^{\deg(f_{n_2})}}{C_{a,b}}\leq (f_{n_2}-f_{n_1})(\alpha)\leq C_{a,b}\alpha^{\deg(f_{n_2})}.$$
	\item For any $[a,b]\subset(1,\infty)$ and $C_{a,b}$ as in $(4)$, for $n_1$ sufficiently large the following inequality is satisfied for all $n_2>n_1$
$$\left( \frac{2\deg(f_{n_2})}{\deg(f_{n_1})}-1\right)\log C_{a,b} +\left(\deg(f_{n_1})-\deg(f_{n_2})\right)\log a -\log\left(\deg(f_{n_2})\left(\frac{\deg(f_{n_2})}{\deg(f_{n_1})}-1\right)\right)\leq -3\log n_2.$$
	
\end{enumerate}
Then for Lebesgue almost every $\alpha>1$ the sequence $(f_{n}(\alpha))_{n=1}^{\infty}$ has Poissonian pair correlations. 
\end{theorem}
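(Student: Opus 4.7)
The plan is to follow the classical second moment method for almost-sure Poissonian pair correlation statements. By taking a countable union, it suffices to prove the conclusion for Lebesgue almost every $\alpha$ in a fixed closed interval $[a,b]\subset (1,\infty)$. Fix $s>0$ and set
\begin{equation*}
F_{s,N}(\alpha):=\frac{1}{N}\#\{1\leq m\neq n\leq N: \|f_n(\alpha)-f_m(\alpha)\|\leq s/N\}.
\end{equation*}
The target is $F_{s,N}(\alpha)\to 2s$ as $N\to\infty$ for a.e.\ $\alpha\in[a,b]$.

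First I would establish the first-moment estimate $\int_a^b F_{s,N}\,d\alpha\to 2s(b-a)$. For each pair $n_1<n_2$, set $g:=f_{n_2}-f_{n_1}$; by (2) it is a strictly increasing convex bijection of $[a,b]$ onto $[g(a),g(b)]$. A change of variables $y=g(\alpha)$ gives
\begin{equation*}
|\{\alpha\in[a,b]:\|g(\alpha)\|\leq s/N\}|=\sum_{k\in \mathbb{Z}\cap[g(a),g(b)]}\int_{[k-s/N,k+s/N]\cap[g(a),g(b)]}\frac{dy}{g'(g^{-1}(y))}.
\end{equation*}
Each interior integer $k$ contributes approximately $\tfrac{2s/N}{g'(g^{-1}(k))}$, and crucially the sum $\sum_k \tfrac{1}{g'(g^{-1}(k))}$ is a Riemann sum for $\int_a^b d\alpha = b-a$. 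Conditions (3) and (4) supply the pointwise bounds on $g'$ needed to control the Riemann-sum error, so each of the $N(N-1)$ ordered pairs contributes $\tfrac{2s(b-a)}{N}(1+o(1))$; summing and dividing by $N$ yields the desired limit.

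The main step is bounding the variance. Expanding $F_{s,N}^2$ as a sum over quadruples $(m_1,n_1,m_2,n_2)$ with $m_i\neq n_i$, one splits by how many of the four indices coincide. Diagonal quadruples (those with a repeated index) number $O(N^3)$ and are controlled by the first-moment estimate. For the $O(N^4)$ quadruples with four distinct indices, one performs a two-dimensional change of variables along $\alpha\mapsto\bigl((f_{n_1}-f_{m_1})(\alpha),(f_{n_2}-f_{m_2})(\alpha)\bigr)$. Generic quadruples, where the Jacobian of this map is large enough for the two defining events to behave almost independently, collectively match $(E F_{s,N})^2\to (2s(b-a))^2$. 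The remaining resonant quadruples, where the Jacobian degenerates, must be estimated directly using (3) and (4); exponentiating (5) yields precisely
\begin{equation*}
\frac{C_{a,b}^{2\deg(f_{n_2})/\deg(f_{n_1})-1}}{a^{\deg(f_{n_2})-\deg(f_{n_1})}\,\deg(f_{n_2})\bigl(\deg(f_{n_2})/\deg(f_{n_1})-1\bigr)}\leq n_2^{-3},
\end{equation*}
which delivers the summability over quadruples needed to conclude that the variance is $o(1)$.

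With the variance estimate in hand, Chebyshev's inequality combined with Borel-Cantelli gives $F_{s,N_k}(\alpha)\to 2s$ for a.e.\ $\alpha\in[a,b]$ along a suitable polynomially-growing subsequence $(N_k)$. A standard sandwiching argument, bounding $F_{s,N}$ between $F_{s-\varepsilon,N_{k+1}}$ and $F_{s+\varepsilon,N_{k+1}}$ (up to controlled error) for $N_k\leq N\leq N_{k+1}$, then upgrades convergence to the full sequence. Intersecting over rational $s$ and over a countable exhausting family of intervals $[a,b]\subset(1,\infty)$ finishes the proof. The main obstacle will be the resonant contribution to the variance: carrying out the two-dimensional change of variables rigorously, identifying the precise threshold at which the Jacobian becomes too small, and verifying that the bound coming from (3) and (4) is exactly the one exponentiated in (5) is the technical heart of the argument.
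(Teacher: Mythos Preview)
Your overall architecture matches the paper: second moment on a fixed interval $[a,b]$, Borel--Cantelli along the subsequence $N_k=k^2$, a sandwich to the full sequence, a density argument in $s$, and a countable exhaustion of $(1,\infty)$. You also correctly recognise that exponentiating hypothesis (5) produces exactly the quantity needed to make the error sum over quadruples converge. But the description of the variance step has two genuine gaps.

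First, there is no ``two-dimensional change of variables'' here: $\alpha$ is one-dimensional, so $\alpha\mapsto(g_1(\alpha),g_2(\alpha))$ traces a curve in $\mathbb{R}^2$ and has no Jacobian. The paper's mechanism is different and purely one-dimensional. For a quadruple with $m<n$, $p<q$ and $q<n$ one first decomposes the event $\|(f_q-f_p)(\alpha)\|\le s/N$ into the disjoint intervals $I_{M,q,p}=\{\alpha:(f_q-f_p)(\alpha)\in[M-s/N,M+s/N]\}$, and on each $I_{M,q,p}$ applies a convexity lemma (Lemma~2.1) to the second event $\|(f_n-f_m)(\alpha)\|\le s/N$. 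The main terms reassemble to $4s^2(b-a)$, while the lemma's error on each interval is $O\bigl(\tfrac{s/N}{(f_n-f_m)'(c_{M,q,p})}\bigr)$ with $c_{M,q,p}$ the left endpoint of $I_{M,q,p}$. Using (4) to get $c_{M,q,p}\gtrsim(M/C_{a,b})^{1/d_q}$ and then (3), the sum over $M$ becomes comparable to $\sum_M C_{a,b}^{d_n/d_q}\big/\bigl(d_n M^{d_n/d_q}\bigr)$, whose integral comparison is precisely the expression that (5) bounds by $n^{-3}$. So your exponentiated inequality is correct, but it arises from this nested-interval conditioning, not from a Jacobian.

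Second, your treatment of quadruples with a repeated index is insufficient. Bounding one indicator by $1$ gives each such term measure $O(1/N)$; with $O(N^3)$ terms and the $1/N^2$ normalisation this is $O(1)$, which is not summable along any polynomially growing subsequence. The dangerous case is when the two ordered pairs share their \emph{top} index, i.e.\ $n=q$ but $m\neq p$: then $f_n-f_m$ and $f_n-f_p$ have the same degree $d_n$, and the conditioning above cannot separate them. The paper (Lemma~2.4) resolves this with the triangle-inequality trick
\[
\chi_{[0,s/N]}(\|f_n-f_m\|)\,\chi_{[0,s/N]}(\|f_n-f_p\|)\ \le\ \chi_{[0,2s/N]}(\|f_p-f_m\|)\,\chi_{[0,s/N]}(\|f_n-f_p\|),
\]
which restores distinct top degrees ($d_p<d_n$) and feeds back into the main estimate. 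Without this step the variance bound you need --- the paper in fact proves $\int_a^b(F_{s,N}-2s)^2\,d\alpha=O_{s,a,b}(1/N)$ --- cannot be closed.
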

The fifth assumption appearing in Theorem \ref{Main thm} might seem a little unwieldy. Essentially it is a condition on the growth rate of the sequence $(\deg(f_n))_{n=1}^{\infty}$. Note that it is not satisfied by the sequence $(n)_{n=1}^{\infty}$, which is why we cannot provide an affirmative answer to Question \ref{question}. However for many natural choices of sequences it is a straightforward exercise to check that this assumption is satisfied. As an example, whenever $(\deg(f_n))_{n=1}^{\infty}=(n^k)_{n=1}^{\infty}$ for some $k\geq 2$ then this assumption is satisfied. Similarly, if $(\deg(f_n))_{n=1}^{\infty}=(n!)_{n=1}^{\infty}$ then the fifth assumption is satisfied. These observations imply the following theorem which follows from Theorem \ref{Main thm}.

\begin{theorem}
	\label{example}
	For Lebesgue almost every $\alpha>1$ the sequences  $(\alpha^{n^k})_{n=1}^{\infty}$ and  $(\alpha^{n^k}+\alpha^{n^k-1}+\cdots+\alpha +1)_{n=1}^{\infty}$ have Poissonian pair correlations for all $k\geq 2$. Similarly, for Lebesgue almost every $\alpha>1$ the sequence $(\alpha^{n!})_{n=1}^{\infty}$ has Poissonian pair correlations.
\end{theorem}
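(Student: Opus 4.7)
The plan is to deduce Theorem~\ref{example} as a corollary of Theorem~\ref{Main thm}, by verifying that each of the three specified families of polynomials satisfies conditions (1)--(5). Since the three cases are very similar, I focus on $f_n(x)=x^{n^k}$, so $\deg f_n=n^k$, and indicate the minor modifications needed for the other two at the end.

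Conditions (1)--(4) are essentially computational. Condition (1) is immediate; condition (2) follows because $x^{n_2^k}-x^{n_1^k}$ has strictly positive first and second derivatives on $(1,\infty)$. For (3), write $(f_{n_2}-f_{n_1})'(\alpha)=n_2^k\alpha^{n_2^k-1}-n_1^k\alpha^{n_1^k-1}$; on $[a,b]$ the subtracted term is dominated by the first once $n_1$ is moderately large, which yields the required lower bound with $c_{a,b}=1/(2b)$. For (4), the identity $\alpha^{n_2^k}-\alpha^{n_1^k}=\alpha^{n_2^k}(1-\alpha^{n_1^k-n_2^k})$ combined with $n_2^k-n_1^k\geq k$ gives a uniform choice such as $C_{a,b}=(1-a^{-k})^{-1}$.

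The substantive step is condition (5). Setting $\lambda=n_2/n_1>1$, the left-hand side becomes
\[
(2\lambda^k-1)\log C_{a,b}+n_1^k(1-\lambda^k)\log a-\log\bigl(n_2^k(\lambda^k-1)\bigr),
\]
and I need to bound this above by $-3\log n_2$ uniformly in $n_2>n_1$, provided $n_1$ is large. The delicate case is $n_2=n_1+1$: then $\lambda^k-1\sim k/n_1$, so the first term tends to $\log C_{a,b}$, the logarithm term equals $-(k-1)\log n_1+O(1)$, and $n_1^k(1-\lambda^k)\sim -kn_1^{k-1}$. Since $k\geq 2$ and $\log a>0$, the term $-kn_1^{k-1}\log a$ is of order at least $-n_1$ in magnitude and drives the whole expression below $-3\log n_2\sim-3\log n_1$ once $n_1$ is large enough. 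For the remaining range of $n_2$ the second summand $n_1^k(1-\lambda^k)\log a=(n_1^k-n_2^k)\log a$ is of order $-n_2^k\log a$, while the positive first summand only grows like $2(n_2/n_1)^k\log C_{a,b}$; once $n_1$ is taken so large that $2\log C_{a,b}/n_1^k<\tfrac12\log a$, the former easily crushes the latter. This verification is the main obstacle, and it is precisely where the hypothesis $k\geq 2$ is essential: for $\deg f_n=n$, the analogous leading term $-kn_1^{k-1}\log a$ collapses to the constant $-\log a$ and no longer beats $-3\log n_2$, which is exactly why the present method cannot settle Question~\ref{question}.

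For the cyclotomic-type sequence $f_n(x)=\sum_{j=0}^{n^k}x^j$ the degree sequence is identical, so condition (5) is unchanged; conditions (1)--(4) follow from the closed form $f_{n_2}(\alpha)-f_{n_1}(\alpha)=(\alpha^{n_2^k+1}-\alpha^{n_1^k+1})/(\alpha-1)$, which on $[a,b]$ is comparable to $\alpha^{n_2^k}$ up to a bounded multiplicative constant absorbed into $C_{a,b}$, and whose derivative is analogously comparable to $n_2^k\alpha^{n_2^k}$. For $f_n(x)=x^{n!}$ one replaces $n^k$ throughout by $n!$; since $n!$ grows faster than any polynomial in $n$, condition (5) is even easier to verify, and the remaining conditions are checked exactly as for $x^{n^k}$.
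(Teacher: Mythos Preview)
Your approach—verifying conditions (1)--(5) of Theorem~\ref{Main thm} for each of the three families—is exactly what the paper does, though the paper leaves all of these verifications (including the crucial condition (5)) as a straightforward exercise and gives no details. Two minor slips are worth tightening: condition (3) must hold for \emph{all} $n_2>n_1\ge 1$, not just for moderately large $n_1$ (it does, uniformly, with for instance $c_{a,b}=(1-a^{-1})/b$, since $n_1^k\alpha^{n_1^k-1}\le a^{-1}\cdot n_2^k\alpha^{n_2^k-1}$ whenever $n_2>n_1$); and in your treatment of (5) the claim that $(n_1^k-n_2^k)\log a$ is ``of order $-n_2^k\log a$'' is inaccurate for $n_2$ near $n_1$, but the conclusion still follows because in that range $\lambda^k$ stays bounded while $n_2^k-n_1^k\ge kn_1^{k-1}$, so the same $-kn_1^{k-1}\log a$ term you used for $n_2=n_1+1$ does the work.
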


\noindent \textbf{Notation.} Throughout this paper we make use of the standard big $\O$ notation, i.e. $X=\O(Y)$ if there exists $C>0$ such that $|X|\leq CY$. When we want to emphasise a dependence for the underlying constant $C$ we will include a subscript, i.e. $X=\O_{a}(Y)$ if $|X|\leq C\cdot Y$ for some $C$ that depends upon $a$. Given a sequence of polynomials $(f_n)_{n=1}^{\infty}$ we will use the notation $(d_n)_{n=1}^{\infty}$ to denote its sequence of degrees. The sequence of polynomials we are referring to will be clear from the context. We let $\L(\cdot)$ denote the Lebesgue measure.
\section{Proof of Theorem \ref{Main thm}}
We will repeatedly use the following lemma in our proof of Theorem \ref{Main thm}.  

\begin{lemma}
	\label{convexity lemma}
	Let $f:[a,b]\to\mathbb{R}$ be a strictly increasing  differentiable convex function. If $I=[c,d]$ or $I=[c,1]\cup[0,d]$ for some $c,d\in[0,1]$, then we have  $$\frac{\L(I)(b-a)}{1+\L(I)}+\O\left(\frac{\L(I)}{f'(a)}\right)\leq \L\left(\alpha\in [a,b]:\{f(\alpha)\}\in I\right)\leq \frac{\L(I)(b-a)}{1-\L(I)}+\O\left(\frac{\L(I)}{f'(a)}\right).$$ Moreover, if the $I$ above is such that $\L(I)=1/m$ for some $m\in\N,$ then the above can be strengthened to $$\L\left(\alpha\in [a,b]:\{f(\alpha)\}\in I\right)=\L(I)(b-a)+ \O\left(\frac{\L(I)}{f'(a)}\right).$$
\end{lemma}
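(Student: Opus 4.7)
The plan is to reduce the problem to an integral estimate via the substitution $u=f(\alpha)$, and then to exploit monotonicity of the resulting weight function, which comes from the convexity of $f$.

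First, since $f:[a,b]\to\R$ is strictly increasing and differentiable, the change of variables $u=f(\alpha)$ gives
$$\L\bigl(\{\alpha\in[a,b]:\{f(\alpha)\}\in I\}\bigr)=\int_{f(a)}^{f(b)}\mathbf{1}_{\{u\}\in I}(u)\,g(u)\,du,$$
where $g(u):=1/f'(f^{-1}(u))$. Convexity of $f$ makes $f'$ non-decreasing, so $g$ is non-increasing on $[f(a),f(b)]$, with $g(u)\leq g(f(a))=1/f'(a)$ throughout the range.

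Second, I write the set $\{u\in\R:\{u\}\in I\}$ as a disjoint union of intervals $J_k$ of common length $\L(I)$, indexed by $k\in\z$: take $J_k=[k+c,k+d]$ in the case $I=[c,d]$, and $J_k=[k+c-1,k+d]$ in the wrap-around case $I=[c,1]\cup[0,d]$. Partition $[f(a),f(b)]$ into the unit intervals $[k,k+1]$ that it contains, together with two partial intervals at the ends. The mean value theorem for integrals supplies, for each full unit interval, points $\xi_k,\eta_k\in[k,k+1]$ with
$$\int_{J_k\cap[k,k+1]}g(u)\,du=\L(I)\,g(\xi_k),\qquad \int_k^{k+1}g(u)\,du=g(\eta_k).$$

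Third, because $g$ is non-increasing and $\xi_k,\eta_k\in[k,k+1]$, one has $|g(\xi_k)-g(\eta_k)|\leq g(k)-g(k+1)$, so the telescoping sum over $k$ is at most $g(f(a))=1/f'(a)$. The partial boundary intervals contribute at most $2\L(I)/f'(a)$ to the main sum and at most $2/f'(a)$ to $\int_{f(a)}^{f(b)}g(u)\,du=b-a$, again because $g$ is bounded by $1/f'(a)$. Assembling these pieces yields the stronger estimate
$$\L\bigl(\{\alpha\in[a,b]:\{f(\alpha)\}\in I\}\bigr)=\L(I)(b-a)+\O\bigl(\L(I)/f'(a)\bigr),$$
which implies both stated inequalities (since $1/(1+\L(I))\leq 1\leq 1/(1-\L(I))$) and yields the moreover statement directly. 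The main obstacle is the bookkeeping around the two partial unit intervals at $f(a)$ and $f(b)$ and verifying the formula for $J_k$ in the wrap-around case; monotonicity of $g$ is what keeps every error term of the claimed form $\O(\L(I)/f'(a))$.
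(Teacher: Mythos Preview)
Your argument is correct and in fact proves more than the lemma claims: the change of variables $u=f(\alpha)$ together with the monotonicity of $g(u)=1/f'(f^{-1}(u))$ and the telescoping estimate $\sum_k|g(\xi_k)-g(\eta_k)|\le g(k_{\min})-g(k_{\max}+1)\le 1/f'(a)$ yields the exact asymptotic
\[
\L\bigl(\alpha\in[a,b]:\{f(\alpha)\}\in I\bigr)=\L(I)(b-a)+\O\!\left(\frac{\L(I)}{f'(a)}\right)
\]
for \emph{every} admissible $I$, not only when $\L(I)=1/m$. The weaker two-sided bounds with denominators $1\pm\L(I)$ then follow trivially.

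This is a genuinely different route from the paper. There the author partitions $[0,1)$ into roughly $1/\L(I)$ intervals of length $\L(I)$, uses pigeonhole to find a ``good'' interval $J$, and then compares $\L(\alpha:\{f(\alpha)\}\in I)$ with $\L(\alpha:\{f(\alpha)\}\in J)$ via the shift inequality $\L(\alpha:f(\alpha)\in L)\ge\L(\alpha:f(\alpha)\in L+t)$ for $t\ge0$ (a direct consequence of convexity). The loss factor $1/(1\pm\L(I))$ is an artefact of that pigeonhole step, and only when $\L(I)=1/m$ does the cover have exactly $m$ pieces, removing the loss. Your approach bypasses the pigeonhole entirely and is cleaner; the paper's approach has the minor advantage of not invoking continuity of $f'$ (which you need for the mean-value step), though this is automatic for differentiable convex $f$.

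One small bookkeeping point: in the wrap-around case $I=[c,1]\cup[0,d]$ your interval $J_k=[k+c-1,k+d]$ satisfies $J_k\cap[k,k+1]=[k,k+d]$, which has length $d$ rather than $\L(I)$; the set $\{u\in[k,k+1]:\{u\}\in I\}$ is actually $[k,k+d]\cup[k+c,k+1]$. This does not damage the argument---either apply the mean value theorem to this two-piece set (valid since $g$ is continuous on $[k,k+1]$ and the average of $g$ over any measurable subset lies in its range), or simply shift the unit partition to $[k+d,k+1+d]$ so that each $J_k$ sits inside a single cell.
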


\begin{proof}
We will prove the statement for general $I$ first. By adding a constant to $f$ if necessary, we can assume without loss of generality that $I=[0,c]$ for some $c\in(0,1]$. We start by proving the lower bound. Consider the following collection of intervals: $$[0,c], [c,2c],\ldots, \left[\left(\left\lfloor  \frac{1}{c}\right \rfloor-1\right)c,\left\lfloor  \frac{1}{c}\right \rfloor c\right], \left[\left\lfloor  \frac{1}{c}\right \rfloor c,1\right].$$ These intervals cover $[0,1)$ and there are $\left\lfloor  \frac{1}{c}\right\rfloor+1$ of them. Therefore, there exists an element of this collection, that we will denote by $J$, such that \begin{equation}
\label{J equation}
\L\left(\alpha\in [a,b]: \{f(\alpha)\}\in J\right)\geq \frac{b-a}{\left\lfloor  \frac{1}{c}\right\rfloor+1}\geq \frac{c(b-a)}{1+c}.
\end{equation} Since $f$ is strictly increasing and convex, the following inequality holds for any interval $L\subset [f(a),f(b)]$ and $t\geq 0$:
\begin{equation}
\label{lower boundaa}
\L(\alpha\in [a,b]:f(\alpha)\in L)\geq \L(\alpha\in [a,b]:f(\alpha)\in L+t).
\end{equation}Using that $f$ is strictly increasing and convex, together with the mean value theorem, we have the following bound. For any interval $L\subset\mathbb{R},$ we have 
\begin{equation}
\label{MVT}
\L(\alpha\in [a,b]:f(\alpha)\in L)=\O\left(\frac{\L(L)}{f'(a)}\right).
\end{equation}

Choosing $t\in[0,1)$ such that $J\subseteq [0,c]+t$ we obtain:
\begin{align*}
\L(\alpha\in [a,b]:\{f(\alpha)\}\in [0,c])&=\sum_{M=\lfloor f(a)\rfloor}^{\lfloor f(b)\rfloor} \L(\alpha\in [a,b]:f(\alpha)\in [0,c]+M)\\
&\stackrel{\eqref{MVT}}{=}\sum_{M=\lfloor f(a)\rfloor+1}^{\lfloor f(b)\rfloor-1} \L(\alpha\in [a,b]:f(\alpha)\in [0,c]+M)+\O\left(\frac{c}{f'(a)}\right)\\
&\stackrel{\eqref{lower boundaa}}{\geq}\sum_{M=\lfloor f(a)\rfloor+1}^{\lfloor f(b)\rfloor-1} \L(\alpha\in [a,b]:f(\alpha)\in [0,c]+t+M)+\O\left(\frac{c}{f'(a)}\right)\\
&\geq \sum_{M=\lfloor f(a)\rfloor+1}^{\lfloor f(b)\rfloor-1} \L(\alpha\in [a,b]:f(\alpha)\in J+M)+\O\left(\frac{c}{f'(a)}\right)\\
&\stackrel{\eqref{MVT}}{=} \sum_{M=\lfloor f(a)\rfloor}^{\lfloor f(b)\rfloor} \L(\alpha\in [a,b]:f(\alpha)\in J+M)+\O\left(\frac{c}{f'(a)}\right)\\
&=\L(\alpha\in [a,b]:\{f(\alpha)\}\in J)+\O\left(\frac{c}{f'(a)}\right)\\
&\stackrel{\eqref{J equation}}{\geq} \frac{c(b-a)}{1+c}+\O\left(\frac{c}{f'(a)}\right).
\end{align*}
This completes the proof of our lower bound. The proof of the upper bound is similar. Note that the upper bound is trivial for $[0,c]$ such that $c\geq 1/2$. As such we restrict our attention to intervals of the form $[0,c]$ for $c<1/2$. This time we consider the collection of intervals $$[0,c], [c,2c],\ldots, \left[\left(\left\lfloor  \frac{1}{c}\right \rfloor-1\right)c,1\right].$$ These intervals cover $[0,1)$ and there are $\left\lfloor  \frac{1}{c}\right \rfloor$ of them. Since the Lebesgue measure of the set of $\alpha$ that are mapped into the intersection of two of these intervals is zero, there exists an element of this collection, that we will denote by $J',$ such that $$\L(\alpha\in [a,b]: \{f(\alpha)\}\in J')\leq \frac{b-a}{\left\lfloor  \frac{1}{c}\right\rfloor}\leq \frac{c(b-a)}{1-c}.$$ Applying \eqref{lower boundaa} in conjunction with \eqref{MVT}, an analogous argument to that given above yields
$$\L(\alpha\in [a,b]:\{f(\alpha)\}\in [0,c])\leq \L(\alpha\in [a,b]: f(\alpha)\in J')+\O\left(\frac{c}{f'(a)}\right).$$ Then by the definition of $J'$ we have $$\L(\alpha\in [a,b]:\{f(\alpha)\}\in [0,c])\leq \frac{c(b-a)}{1-c}+\O\left(\frac{c}{f'(a)}\right).$$ This completes our proof of the upper bound.


To deduce the stronger statement when $\L(I)=1/m$ for some $m\in\N,$ notice that the two collections of intervals appearing in the proof of the lower bound and upper bound can both be replaced by the single collection given by the intervals $$[0,1/m], [1/m,2/m],\ldots, [(m-1)/m,1].$$ Importantly this collection consists of exactly $m$ elements. Repeating the arguments given above for this collection yields the stronger statement.

\end{proof}
The following proposition is the tool that allows us to prove Theorem \ref{Main thm}.
\begin{proposition}
	\label{error prop}
	Let $(f_n)_{n=1}^{\infty}$ be a sequence of polynomials satisfying the hypothesis of Theorem \ref{Main thm}.
	Then for any $[a,b]\subset(1,\infty)$ and $s>0$ we have $$\int_{a}^{b}\left(\frac{\#\{1\leq m\neq n\leq N: \|f_{n}(\alpha)-f_{m}(\alpha)\|\leq \frac{s}{N}\}}{N}-2s\right)^2 d\alpha=\O_{s,a,b}\left(\frac{1}{N}\right).$$
\end{proposition}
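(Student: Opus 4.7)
The plan is to expand the square and reduce the claim to asymptotics for the first and second moments of
\[
R(\alpha,N):=\#\set{1\le m\neq n\le N : \|f_n(\alpha)-f_m(\alpha)\|\le s/N}.
\]
Since
\[
\int_a^b \bigl(R/N - 2s\bigr)^2 d\alpha = \int_a^b \frac{R^2}{N^2}\,d\alpha - 4s\int_a^b \frac{R}{N}\,d\alpha + 4s^2(b-a),
\]
the proposition reduces to showing $\int_a^b R/N\,d\alpha = 2s(b-a) + \O_{s,a,b}(1/N)$ together with $\int_a^b R^2/N^2\,d\alpha = 4s^2(b-a) + \O_{s,a,b}(1/N)$.

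For the first moment I would interchange sum and integral to reduce to estimating $\L\bigl(\set{\alpha\in[a,b]:\|(f_n-f_m)(\alpha)\|\le s/N}\bigr)$ for each pair $m<n$. Hypothesis (2) makes $f_n-f_m$ strictly increasing and convex, so Lemma \ref{convexity lemma} applies with the set $I=[0,s/N]\cup[1-s/N,1]$ of length $2s/N$, giving a main term $(2s(b-a)/N)(1+\O(s/N))$ and error $\O((s/N)/(f_n-f_m)'(a))$. By hypothesis (3) this error is $\O(s/(N c_{a,b}d_n a^{d_n}))$, which sums geometrically in $d_n$ to $\O(1/N)$; summing the $\sim N^2$ main contributions then yields $2s(b-a)+\O_{s,a,b}(1/N)$.

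For the second moment I would expand $R^2$ as a sum over quadruples $((m_1,n_1),(m_2,n_2))$ with $m_i\neq n_i$ and split by the cardinality of $\set{m_1,n_1,m_2,n_2}$. The $\O(N^2)$ quadruples in which the two unordered pairs coincide contribute $\O(s/N)$ each by the first-moment analysis, hence $\O(s/N)$ in total after division by $N^2$. For the remaining quadruples set $g_i=f_{n_i}-f_{m_i}$ with $n_i>m_i$; after relabelling so that $d_{n_1}<d_{n_2}$, or, in the coincident case $n_1=n_2$, replacing $g_2$ by $g_2-g_1=f_{m_1}-f_{m_2}$ so that the two differences have distinct degrees, I would apply Lemma \ref{convexity lemma} in two stages: first partition $[a,b]$ into the preimages under the slower-growing difference of the arcs $k+[-s/N,s/N]$ for $k\in\z\cap[g_1(a),g_1(b)]$, and on each preimage apply the lemma to the faster-growing difference. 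The main terms combine to $(2s/N)^2(b-a)$ per quadruple (plus lower order); multiplied by the $\sim N^4$ quadruples and divided by $N^2$, this produces the leading constant $4s^2(b-a)$.

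The main obstacle is controlling the errors from the two-stage application of Lemma \ref{convexity lemma}. Careful bookkeeping, using hypothesis (3) to lower-bound the derivatives $g_i'$ and hypothesis (4) to upper-bound both the number of preimage intervals of the slower difference and the ratio of derivatives appearing when the faster difference is integrated over a preimage, shows that the error attached to a single quadruple depends on $(n_1,n_2)$ exactly through the exponential of the left-hand side of hypothesis (5). Summing freely over the $\O(N^2)$ choices of $(m_1,m_2)$ and dividing by $N^2$, hypothesis (5) then guarantees that for $n_1$ sufficiently large each summand of the remaining double sum over $n_1<n_2$ is $\O(n_2^{-3})$, so the sum is $\O(1/N)$; the finitely many pairs with small $n_1$ are absorbed into the implicit constant. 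Combining this with the first-moment estimate completes the proof.
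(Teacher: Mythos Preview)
Your outline is essentially the paper's proof: expand the square, control the first moment by a single application of Lemma~\ref{convexity lemma} (using hypotheses (2)--(3) to bound the derivative error), and control the cross terms by a two-stage application of the lemma (partitioning by the slower-growing difference, then applying the lemma to the faster one on each piece), reducing the case $n_1=n_2$ to one with distinct top degrees via the triangle inequality exactly as in Lemma~\ref{lemma2}. The paper organises things as one-sided inequalities (upper bound on the quadruple sum, lower bound on the first moment) rather than the two-sided asymptotics you state, but that is cosmetic.

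One accounting slip: the error in the inner application of Lemma~\ref{convexity lemma} on each preimage interval carries the factor $2s/N$ coming from $\L(I)$, so the error attached to a single quadruple is $\O\bigl((s/N)\exp(\text{LHS of (5)})\bigr)$, not just $\exp(\text{LHS of (5)})$. With this factor restored, after summing over $(m_1,m_2)$ and dividing by $N^2$ each summand in the double sum over $n_1<n_2$ is $\O\bigl((s/N)\,n_2^{-3}\bigr)$, and then $\sum_{n_1<n_2} n_2^{-3}=\O(1)$ gives the claimed $\O(1/N)$; as written, ``each summand is $\O(n_2^{-3})$, so the sum is $\O(1/N)$'' does not follow, since $\sum_{n_1<n_2\le N} n_2^{-3}$ is only $\O(1)$.
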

We split our proof of Proposition \ref{error prop} into a series of lemmas. Throughout this paper $\chi_{A}$ denotes the indicator function on a set $A\subset \mathbb{R}$. We start by expanding the bracket appearing within the integral to obtain: 
\begin{align}
\label{split}
&\int_{a}^{b}\left(\frac{\#\{1\leq m\neq n\leq N: \|f_{n}(\alpha)-f_{m}(\alpha)\|\leq \frac{s}{N}\}}{N}-2s\right)^2 \, d\alpha\\
=&\frac{1}{N^2}\sum_{\substack{1\leq m\neq n\leq N\\ 1\leq p\neq q\leq N\\(m,n)\neq (p,q)}}\int_{a}^{b}\chi_{[0,\frac{s}{N}]}(\|f_{n}(\alpha)-f_{m}(\alpha)\|)\chi_{[0,\frac{s}{N}]}(\|f_{q}(\alpha)-f_{p}(\alpha)\|) \,d\alpha\nonumber \\
+&\frac{1}{N^2}\sum_{1\leq m\neq n\leq N}\int_{a}^{b}\chi_{[0,\frac{s}{N}]}(\|f_{n}(\alpha)-f_{m}(\alpha)\|)\,d\alpha\nonumber\\
-&\frac{4s}{N}\sum_{1\leq m\neq n\leq N}\int_{a}^{b}\chi_{[0,\frac{s}{N}]}(\|f_{n}(\alpha)-f_{m}(\alpha)\|)\,d\alpha\nonumber\\
+ &4s^2(b-a).\nonumber
\end{align} We will focus on each term on the right hand side of \eqref{split} individually. It is useful at this point to rewrite the first term as follows:
\begin{align}
\label{double split}
&\frac{1}{N^2}\sum_{\substack{1\leq m\neq n\leq N\\ 1\leq p\neq q\leq N\\(m,n)\neq (p,q)}}\int_{a}^{b}\chi_{[0,\frac{s}{N}]}(\|f_{n}(\alpha)-f_{m}(\alpha)\|)\chi_{[0,\frac{s}{N}]}(\|f_{q}(\alpha)-f_{p}(\alpha)\|)\, d\alpha\nonumber\\
=&\frac{4}{N^2}\sum_{\substack{1\leq m< n\leq N\\ 1\leq p< q\leq N\\(m,n)\neq (p,q)}}\int_{a}^{b}\chi_{[0,\frac{s}{N}]}(\|f_{n}(\alpha)-f_{m}(\alpha)\|)\chi_{[0,\frac{s}{N}]}(\|f_{q}(\alpha)-f_{p}(\alpha)\|)\, d\alpha\nonumber\\
=&\frac{4}{N^2}\sum_{\substack{1\leq m< n\leq N\\ 1\leq p< q\leq N\\ q\neq n}}\int_{a}^{b}\chi_{[0,\frac{s}{N}]}(\|f_{n}(\alpha)-f_{m}(\alpha)\|)\chi_{[0,\frac{s}{N}]}(\|f_{q}(\alpha)-f_{p}(\alpha)\|)\, d\alpha\nonumber\\
+&\frac{4}{N^2}\sum_{1\leq m\neq p< n\leq N}\int_{a}^{b}\chi_{[0,\frac{s}{N}]}(\|f_{n}(\alpha)-f_{m}(\alpha)\|)\chi_{[0,\frac{s}{N}]}(\|f_{n}(\alpha)-f_{p}(\alpha)\|)\, d\alpha\nonumber\\
=&\frac{8}{N^2}\sum_{\substack{1\leq m< n\leq N\\ 1\leq p< q\leq N\\ q< n}}\int_{a}^{b}\chi_{[0,\frac{s}{N}]}(\|f_{n}(\alpha)-f_{m}(\alpha)\|)\chi_{[0,\frac{s}{N}]}(\|f_{q}(\alpha)-f_{p}(\alpha)\|)\, d\alpha\\
+&\frac{8}{N^2}\sum_{\substack{1\leq m<p< n\leq N}}\int_{a}^{b}\chi_{[0,\frac{s}{N}]}(\|f_{n}(\alpha)-f_{m}(\alpha)\|)\chi_{[0,\frac{s}{N}]}(\|f_{n}(\alpha)-f_{p}(\alpha)\|)\, d\alpha.\nonumber
\end{align} The behaviour of the two terms on the right hand side of \eqref{double split} is described by the following lemmas.
\begin{lemma}
	\label{lemma1}
	Suppose $(f_n)_{n=1}^{\infty}$ is a sequence of polynomials satisfying the hypothesis of Theorem \ref{Main thm}. Then for any $[a,b]\subset(1,\infty)$ and $s>0$ we have
	$$\frac{8}{N^2}\sum_{\substack{1\leq m< n\leq N\\ 1\leq p< q\leq N\\ q< n}}\int_{a}^{b}\chi_{[0,\frac{s}{N}]}(\|f_{n}(\alpha)-f_{m}(\alpha)\|)\chi_{[0,\frac{s}{N}]}(\|f_{q}(\alpha)-f_{p}(\alpha)\|)\, d\alpha\leq 4s^2 (b-a)+\O_{s,a,b}\left(\frac{1}{N}\right).$$
\end{lemma}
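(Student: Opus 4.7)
The plan is to fix an admissible quadruple $(m,n,p,q)$ with $m<n$ and $p<q<n$, obtain a sharp upper bound on
$$I(m,n,p,q):=\L(A_{n,m}\cap A_{p,q}),\qquad A_{i,j}:=\{\alpha\in[a,b]:\|f_i(\alpha)-f_j(\alpha)\|\le s/N\},$$
and then sum over admissible quadruples. Setting $g_1:=f_n-f_m$ and $g_2:=f_q-f_p$, hypotheses (1) and (2) guarantee that both are strictly increasing and convex on $(1,\infty)$, and crucially $d_n=\deg g_1>\deg g_2=d_q$.

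First I would decompose $A_{p,q}$ as a disjoint union $\bigsqcup_k J_k$ of intervals, each $J_k$ being the preimage under the monotone $g_2$ of a translate of $[-s/N,s/N]\subset\R$. Applying Lemma \ref{convexity lemma} to $g_1$ on each $J_k$ and summing, followed by a second application of Lemma \ref{convexity lemma} to $g_2$ on $[a,b]$ itself, yields
$$I(m,n,p,q)\le\frac{4s^2(b-a)}{N^2}\bigl(1+O(s/N)\bigr)+O_{s,a,b}\!\left(\frac{1}{N^2 d_q a^{d_q}}\right)+\frac{2s}{N}\sum_k\frac{1}{g_1'(\alpha_k)},$$
where $\alpha_k$ denotes the left endpoint of $J_k$. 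Counting shows that the number of admissible quadruples is $\frac{1}{8}N^4(1+o(1))$, so multiplying the main term by the prefactor $8/N^2$ recovers precisely the $4s^2(b-a)$ claimed in the Lemma.

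The heart of the argument is the estimate of $\sum_k 1/g_1'(\alpha_k)$. Since $g_2(\alpha_k)$ is an integer, hypothesis (4) gives $\alpha_k\ge (k/C_{a,b})^{1/d_q}$, while hypothesis (3) gives $g_1'(\alpha_k)\ge c_{a,b}d_n\alpha_k^{d_n}$. Recognising the resulting sum as a tail of the convergent $p$-series with $p=d_n/d_q>1$, I obtain
$$\sum_k\frac{1}{g_1'(\alpha_k)}=O_{a,b}\!\left(\frac{C_{a,b}^{2d_n/d_q-1}\,d_q\,a^{d_q-d_n}}{d_n(d_n-d_q)}\right).$$
The exponentiated form of hypothesis (5) is exactly $C_{a,b}^{2d_n/d_q-1}a^{d_q-d_n}\le d_n(d_n-d_q)/(d_q n^3)$, which collapses this bound to $\sum_k 1/g_1'(\alpha_k)=O_{a,b}(1/n^3)$. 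This is the whole point of hypothesis (5).

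The main obstacle will be to verify that the summed residuals are $O_{s,a,b}(1/N)$. The residual per quadruple is $O_{s,a,b}(1/(Nn^3))$, and for fixed $n$ there are $\sim n^3/2$ admissible triples $(m,p,q)$; these factors cancel so that the contribution at level $n$ is $O_{s,a,b}(1/N)$, and summation over $n\le N$ followed by multiplication by the prefactor $8/N^2$ yields a total of $O_{s,a,b}(1/N^2)$. The finitely many small $q$ below the threshold in hypothesis (5) contribute only $O_{s,a,b}(1/N^2)$ by the same argument applied with a crude bound, and the remaining lower-order errors from the convexity lemma on $g_2$ and from the multiplicative $1/(1-2s/N)$ factor are controlled by direct but simpler estimates.
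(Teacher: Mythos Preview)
Your proposal is correct and follows essentially the same route as the paper: decompose $A_{p,q}$ into the intervals $J_k$ (the paper's $I_{M,q,p}$), apply Lemma~\ref{convexity lemma} to $g_1$ on each $J_k$ and then to $g_2$ on $[a,b]$, and control the residual $\sum_k 1/g_1'(\alpha_k)$ as a tail of a $p$-series via hypotheses (3), (4) and the exponentiated form of (5). One small streamlining you make is to apply hypothesis (3) directly at $\alpha_k\in[a,b]$ and only then substitute the lower bound $\alpha_k\ge (k/C_{a,b})^{1/d_q}$, whereas the paper first uses convexity to pass to the point $((M+2)/C_{a,b})^{1/d_q}$ and then has to argue that this point lies in an enlarged interval $[\tfrac{1+a}{2},2b]$ before invoking (3); your order of operations avoids that detour.
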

\begin{proof}
To each $1\leq p<q\leq N$ and $M\in \big[\lfloor f_{q}(a)-f_{p}(a)\rfloor,\lceil f_{q}(b)-f_{p}(b)\rceil\big]$  we associate the interval $$I_{M,q,p}:=\left\{\alpha\in[a,b]:f_{q}(\alpha)-f_{p}(\alpha)\in \left[M-\frac{s}{N},M+\frac{s}{N}\right]\right\}.$$ This is an interval because the function $f_{q}-f_{p}$ is strictly increasing. We note that \begin{equation}
\label{union}\left\{\alpha\in[a,b]:\|f_{q}(\alpha)-f_{p}(\alpha)\|\leq \frac{s}{N}\right\}=\bigcup_{M=\lfloor f_{q}(a)-f_{p}(a)\rfloor}^{\lceil f_{q}(b)-f_{p}(b)\rceil} I_{M,q,p}.
\end{equation} We denote the left hand point of each non-empty $I_{M,q,p}$ by $c_{M,q,p}$.

Note that $\|f_{n}(\alpha)-f_{m}(\alpha)\|\in[0,\frac{s}{N}]$ if any only if $\{f_{n}(\alpha)-f_{m}(\alpha)\} \in[0,\frac{s}{N}]\cup [1-\frac{s}{N},1)$. Therefore by an application of Lemma \ref{convexity lemma} we have
\begin{align}
\label{second split}
&\sum_{\substack{1\leq m< n\leq N\\ 1\leq p< q\leq N\\ q< n}}\int_{a}^{b}\chi_{[0,\frac{s}{N}]}(\|f_{n}(\alpha)-f_{m}(\alpha)\|)\chi_{[0,\frac{s}{N}]}(\|f_{q}(\alpha)-f_{p}(\alpha)\|)\, d\alpha\nonumber\\
&=\sum_{\substack{1\leq m< n\leq N\\ 1\leq p< q\leq N\\ q< n}}\sum_{M=\lfloor f_{q}(a)-f_{p}(a)\rfloor}^{\lceil f_{q}(b)-f_{p}(b)\rceil}\int_{I_{M,q,p}}\chi_{[0,\frac{s}{N}]}(\|f_{n}(\alpha)-f_{m}(\alpha)\|)\, d\alpha\nonumber\\
&\leq \sum_{\substack{1\leq m< n\leq N\\ 1\leq p< q\leq N\\ q< n}}\sum_{M=\lfloor f_{q}(a)-f_{p}(a)\rfloor}^{\lceil f_{q}(b)-f_{p}(b)\rceil}\frac{\L(I_{M,q,p})2s/N}{1-2s/N}\\
&+\O\left(\sum_{\substack{1\leq m< n\leq N\\ 1\leq p< q\leq N\\ q< n}}\sum_{M=\lfloor f_{q}(a)-f_{p}(a)\rfloor}^{\lceil f_{q}(b)-f_{p}(b)\rceil}\frac{2s}{N(f_{n}-f_{m})'(c_{M,q,p})}\right)\nonumber. 
\end{align}
We now treat the two terms appearing in \eqref{second split} separately.\\

\noindent \textbf{Bounding the first term in \eqref{second split}.}

By an application of Lemma \ref{convexity lemma} and \eqref{union} we have
\begin{align}
\label{soon to sub}
&\sum_{\substack{1\leq m< n\leq N\\ 1\leq p< q\leq N\\ q< n}}\sum_{M=\lfloor f_{q}(a)-f_{p}(a)\rfloor}^{\lceil f_{q}(b)-f_{p}(b)\rceil}\frac{\L(I_{M,q,p})2s/N}{1-2s/N}\\
=&\frac{2s}{N-2s}\sum_{\substack{1\leq m< n\leq N\\ 1\leq p< q\leq N\\ q< n}}\sum_{M=\lfloor f_{q}(a)-f_{p}(a)\rfloor}^{\lceil f_{q}(b)-f_{p}(b)\rceil}\L(I_{M,q,p})\nonumber\\
\leq& \frac{2s}{N-2s}\sum_{\substack{1\leq m< n\leq N\\ 1\leq p< q\leq N\\ q< n}}\left(\frac{(b-a)2s/N}{1-2s/N}+\O\left(\frac{2s}{N(f_{q}-f_{p})'(a)}\right)\right)\nonumber\\
=&\frac{4s^2(b-a)}{(N-2s)^2}\sum_{\substack{1\leq m< n\leq N\\ 1\leq p< q\leq N\\ q< n}}1\nonumber\\
 +&\O\left(\frac{4s^2}{N(N-2s)}\sum_{\substack{1\leq m< n\leq N\\ 1\leq p< q\leq N\\ q< n}}\frac{1}{(f_{q}-f_{p})'(a)}\right)\nonumber.
\end{align} By our third assumption we know that $(f_{q}-f_{p})'(a)\geq c_{a,b}d_{q}a^{d_q}$ for $q>p$. We also know by our first assumption that $(d_n)_{n=1}^{\infty}$ is a strictly increasing sequence of natural numbers, therefore $d_n\geq n$ for all $n\in\N.$ This implies $(f_{q}-f_{p})'(a)\geq c_{a,b}qa^{q}$ for $q>p.$ Therefore
\begin{align}
\label{BOUND}
\sum_{\substack{1\leq m< n\leq N\\ 1\leq p< q\leq N\\ q< n}}\frac{1}{(f_{q}-f_{p})'(a)}
=& \sum_{n=3}^{N}\sum_{m=1}^{n-1}\sum_{p=1}^{n-2}\sum_{q=p+1}^{n-1}\frac{1}{(f_{q}-f_{p})'(a)}\nonumber\\
=&\O_{a,b}\left( \sum_{n=3}^{N}\sum_{m=1}^{n-1}\sum_{p=1}^{n-2}\sum_{q=p+1}^{n-1}	\frac{1}{qa^q}\right)\nonumber\\
=& \O_{a,b}\left(\sum_{n=3}^{N}\sum_{m=1}^{n-1}\sum_{p=1}^{n-2}\frac{1}{(p+1)a^{p+1}}\right)\nonumber\\
=& \O_{a,b}\left(\sum_{n=3}^{N}\sum_{m=1}^{n-1}1\right)\nonumber\\
=&\O_{a,b}(N^2).
\end{align}
A straightforward calculation yields
\begin{equation}
\label{count}
\sum_{\substack{1\leq m< n\leq N\\ 1\leq p< q\leq N\\ q< n}}1=\frac{N^4}{8} +\O(N^3).
\end{equation}
Substituting \eqref{BOUND} and \eqref{count} into \eqref{soon to sub}, we see that the following holds for the first term in \eqref{second split}
$$\sum_{\substack{1\leq m< n\leq N\\ 1\leq p< q\leq N\\ q< n}}\sum_{M=\lfloor f_{q}(a)-f_{p}(a)\rfloor}^{\lceil f_{q}(b)-f_{p}(b)\rceil}\frac{\L(I_{M,q,p})2s/N}{1-2s/N}
\leq\frac{s^2(b-a)N^4}{2(N-2s)^2}+\O_{s,a,b}(N).$$ It is easy to show that $$\frac{s^2(b-a)N^4}{2(N-2s)^2}=\frac{s^2(b-a)N^2}{2}+\O_{s,a,b}(N).$$ Therefore the following holds for the first term in \eqref{second split}
\begin{equation}
\label{bound 1}
\sum_{\substack{1\leq m< n\leq N\\ 1\leq p< q\leq N\\ q< n}}\sum_{M=\lfloor f_{q}(a)-f_{p}(a)\rfloor}^{\lceil f_{q}(b)-f_{p}(b)\rceil}\frac{\L(I_{M,q,p})2s/N}{1-2s/N}
\leq \frac{s^2(b-a)N^2}{2}+\O_{s,a,b}(N).
\end{equation}
\noindent \textbf{Bounding the second term in \eqref{second split}.}

By the fifth assumption listed in Theorem \ref{Main thm}, we know that there exists some $N_1\in\mathbb{N}$ for which
\begin{equation}
\label{hypothesis bound}
\left( \frac{2d_n}{d_q}-1\right)\log C_{a,b} +\left(d_q-d_n\right)\log a -\log\left(d_n\left(\frac{d_n}{d_q}-1\right)\right)\leq -3\log n
\end{equation}whenever $q\geq N_1$ and $n>q$. The equation below describes the error that occurs by restricting the second term in \eqref{second split} to $q\geq N_1$. As we will see, this error will be negligible. We have
\begin{align}
\label{N error}
\sum_{\substack{1\leq m< n\leq N\\ 1\leq p< q\leq N\\ q< n}}\sum_{M=\lfloor f_{q}(a)-f_{p}(a)\rfloor}^{\lceil f_{q}(b)-f_{p}(b)\rceil}\frac{2s}{N(f_{n}-f_{m})'(c_{M,q,p})}&=\sum_{\substack{1\leq m< n\leq N\\ 1\leq p< q\leq N\\ N_1\leq q< n}}\sum_{M=\lfloor f_{q}(a)-f_{p}(a)\rfloor}^{\lceil f_{q}(b)-f_{p}(b)\rceil}\frac{2s}{N(f_{n}-f_{m})'(c_{M,q,p})}\nonumber \\
&+\sum_{\substack{1\leq m< n\leq N\\ 1\leq p< q< N_1\\ q< n}}\sum_{M=\lfloor f_{q}(a)-f_{p}(a)\rfloor}^{\lceil f_{q}(b)-f_{p}(b)\rceil}\frac{2s}{N(f_{n}-f_{m})'(c_{M,q,p})}\nonumber\\
&=\sum_{\substack{1\leq m< n\leq N\\ 1\leq p< q\leq N\\ N_1\leq q< n}}\sum_{M=\lfloor f_{q}(a)-f_{p}(a)\rfloor}^{\lceil f_{q}(b)-f_{p}(b)\rceil}\frac{2s}{N(f_{n}-f_{m})'(c_{M,q,p})}\nonumber\\
&+\O_{s,a,b}\left(\sum_{\substack{1\leq m< n\leq N\\ 1\leq p< q< N_1\\ q< n}}\frac{1}{N}\right)\nonumber\\
&=\sum_{\substack{1\leq m< n\leq N\\ 1\leq p< q\leq N\\ N_1\leq q< n}}\sum_{M=\lfloor f_{q}(a)-f_{p}(a)\rfloor}^{\lceil f_{q}(b)-f_{p}(b)\rceil}\frac{2s}{N(f_{n}-f_{m})'(c_{M,q,p})}\\
&+\O_{s,a,b}\left(N\right).\nonumber
\end{align}In the penultimate equality we used that for any $q\leq N_1,$ for $p<q$ and $m,n$ satisfying $m<n$ and $q<n$, we have 
$$\sum_{M=\lfloor f_{q}(a)-f_{p}(a)\rfloor}^{\lceil f_{q}(b)-f_{p}(b)\rceil}\frac{1}{(f_{n}-f_{m})'(c_{M,q,p})}=\O_{a,b}(1).$$

We now bound the first term on the right hand side of \eqref{N error}. Recall that by our fourth assumption there exists $C_{a,b}>1$ such that $f_{q}(\alpha)-f_p(\alpha)\leq C_{a,b}\alpha^{d_q}$ for all $\alpha\in[a,b].$ Therefore $$C_{a,b}c_{M,q,p}^{d_q}\geq M-\frac{s}{N}.$$ Increasing $C_{a,b}$ if necessary, we may assume without loss of generality that $$C_{a,b}c_{M,q,p}^{d_q}\geq M+2$$ holds for all $c_{M,q,p}.$ Therefore
\begin{equation}
\label{growth bound}
c_{M,q,p}\geq \left(\frac{M+2}{C_{a,b}}\right)^{1/d_q}.
\end{equation}
Using the fact that $f_n-f_m$ is convex, we see that \eqref{growth bound} implies 
\begin{equation*}
\label{derivative bound}
(f_n-f_m)'(c_{M,q,p})\geq (f_n-f_m)'\left(\left(\frac{M+2}{C_{a,b}}\right)^{1/d_q}\right).
\end{equation*} Therefore
\begin{align}
\label{lemon}
&\sum_{\substack{1\leq m< n\leq N\\ 1\leq p< q\leq N\\ N_1\leq q< n}}\sum_{M=\lfloor f_{q}(a)-f_{p}(a)\rfloor}^{\lceil f_{q}(b)-f_{p}(b)\rceil}\frac{2s}{N(f_{n}-f_{m})'(c_{M,q,p})}\\
\leq&\sum_{\substack{1\leq m< n\leq N\\ 1\leq p< q\leq N\\ N_1\leq q< n}}\sum_{M=\lfloor f_{q}(a)-f_{p}(a)\rfloor}^{\lceil f_{q}(b)-f_{p}(b)\rceil}\frac{2s}{N(f_n-f_m)'\left(\left(\frac{M+2}{C_{a,b}}\right)^{1/d_q}\right)}.\nonumber
\end{align} We would now like to be able to use our third assumption to assert that
$$(f_n-f_m)'\left(\left(\frac{M+2}{C_{a,b}}\right)^{1/d_q}\right)\geq c_{a,b}d_n\left(\frac{M+2}{C_{a,b}}\right)^{d_n/d_q}.$$ However we cannot apply this assumption directly since $\left(\frac{M+2}{C_{a,b}}\right)^{1/d_q}$ is not necessarily contained in $[a,b]$. However, we know by our fourth assumption that $f_q(a)-f_p(a)\geq \frac{a^{d_q}}{C_{a,b}}$ and $f_q(b)-f_p(b)\leq C_{a,b}b^{d_q}$ for all $p<q$. Using these facts, together with the property $\lim_{q\to\infty}d_q=\infty$, it follows that for $q$ sufficiently large, for $p<q$ and $M\in\big[\lfloor f_{q}(a)-f_{p}(a)\rfloor, \lceil f_{q}(b)-f_{p}(b)\rceil\big],$ we have 
\begin{equation}
\label{M inclusion}
\left(\frac{M+2}{C_{a,b}}\right)^{1/d_q}\in\left[ \frac{1+a}{2}, 2b\right].
\end{equation} Without loss of generality we can assume that the $N_1$ we chose earlier was sufficiently large to guarantee \eqref{M inclusion} holds for any $M\in\big[\lfloor f_{q}(a)-f_{p}(a)\rfloor, \lceil f_{q}(b)-f_{p}(b)\rceil\big]$ for $q\geq N_1$ and $p<q$. In which case we can apply our third assumption for the interval $[\frac{1+a}{2}, 2b]$ to assert that there exists a constant $c_{a,b}'>0$ such that $$(f_n-f_m)'\left(\left(\frac{M+2}{C_{a,b}}\right)^{1/d_q}\right)\geq c_{a,b}'d_n\left(\frac{M+2}{C_{a,b}}\right)^{d_n/d_q}$$ for any $M\in\big[\lfloor f_{q}(a)-f_{p}(a)\rfloor, \lceil f_{q}(b)-f_{p}(b)\rceil\big]$ for $q\geq N_1$ and $p<q$.
Using this bound in \eqref{lemon} we have
\begin{align}
\label{handstand}
&\sum_{\substack{1\leq m< n\leq N\\ 1\leq p< q\leq N\\ N_1\leq q< n}}\sum_{M=\lfloor f_{q}(a)-f_{p}(a)\rfloor}^{\lceil f_{q}(b)-f_{p}(b)\rceil}\frac{2s}{N(f_{n}-f_{m})'(c_{M,q,p})}\nonumber\\
=&\mathcal{O}_{s,a,b}\left(\frac{1}{N} \sum_{\substack{1\leq m< n\leq N\\ 1\leq p< q\leq N\\ N_1\leq q< n}}\sum_{M=\lfloor f_{q}(a)-f_{p}(a)\rfloor}^{\lceil f_{q}(b)-f_{p}(b)\rceil}\frac{C_{a,b}^{d_n/d_q}}{d_n(M+2)^{d_n/d_q}}\right)\nonumber\\
=&\mathcal{O}_{s,a,b}\left(\frac{1}{N} \sum_{\substack{1\leq m< n\leq N\\ 1\leq p< q\leq N\\ N_1\leq q< n}}\frac{C_{a,b}^{d_n/d_q}}{d_n}\int_{\lfloor f_{q}(a)-f_{p}(a)\rfloor-1}^{\lceil f_{q}(b)-f_{p}(b)\rceil-1}\frac{1}{(x+2)^{d_n/d_q}}\,dx\right)\nonumber\\
=&\O_{s,a,b}\left(\frac{1}{N} \sum_{\substack{1\leq m< n\leq N\\ 1\leq p< q\leq N\\ N_1\leq q< n}}\frac{(C_{a,b}^{d_n/d_q}(\lfloor f_{q}(a)-f_{p}(a)\rfloor+1)^{1-d_n/d_q}}{d_n(d_n/d_q -1)}\right)\nonumber\\
=&\O_{s,a,b}\left(\frac{1}{N} \sum_{\substack{1\leq m< n\leq N\\ 1\leq p< q\leq N\\ N_1\leq q< n}}\frac{(C_{a,b}^{d_n/d_q}( f_{q}(a)-f_{p}(a))^{1-d_n/d_q}}{d_n(d_n/d_q -1)}\right)\nonumber\\
=&\O_{s,a,b}\left(\frac{1}{N} \sum_{\substack{1\leq m< n\leq N\\ 1\leq p< q\leq N\\ N_1\leq q< n}}\frac{C_{a,b}^{d_n/d_q}(a^{d_q}/C_{a,b})^{1-d_n/d_q}}{d_n(d_n/d_q -1)}\right).
\end{align}In the final line we used our fourth assumption that $f_{q}(a)-f_p(a)\geq \frac{a^{d_q}}{C_{a,b}}$. By \eqref{hypothesis bound} we know that $$\frac{C_{a,b}^{d_n/d_q}(a^{d_q}/C_{a,b})^{1-d_n/d_q}}{d_n(d_n/d_q -1)}\leq \frac{1}{n^3}$$ whenever $ q\geq N_1$ and $n>q$. Substituting this bound into \eqref{handstand} we obtain
\begin{align*}
\sum_{\substack{1\leq m< n\leq N\\ 1\leq p< q\leq N\\ N_1\leq q< n}}\sum_{M=\lfloor f_{q}(a)-f_{p}(a)\rfloor}^{\lceil f_{q}(b)-f_{p}(b)\rceil}\frac{2s}{N(f_{n}-f_{m})'(c_{M,q,p})}&=\O_{s,a,b}\left(\frac{1}{N} \sum_{\substack{1\leq m< n\leq N\\ 1\leq p< q\leq N\\ N_1\leq q< n}}\frac{1}{n^3}\right)\\
&=\O_{s,a,b}\left(\frac{1}{N}\sum_{q=N_1}^{N-1}\sum_{p=1}^{q-1}\sum_{n=q+1}^{N}\sum_{m=1}^{n-1}\frac{1}{n^3}\right)\\
&=\O_{s,a,b}\left(\frac{1}{N}\sum_{q=N_1}^{N-1}\sum_{p=1}^{q-1}\sum_{n=q+1}^{N}\frac{1}{n^2}\right)\\
&=\O_{s,a,b}\left(\frac{1}{N}\sum_{q=N_1}^{N-1}\sum_{p=1}^{q-1}\frac{1}{q}\right)\\
&=\O_{s,a,b}\left(\frac{1}{N}\sum_{q=N_1}^{N-1}1\right).\\
&=\O_{s,a,b}(1)
\end{align*}

\noindent Therefore 
\begin{equation*}
\sum_{\substack{1\leq m< n\leq N\\ 1\leq p< q\leq N\\ N_1\leq q< n}}\sum_{M=\lfloor f_{q}(a)-f_{p}(a)\rfloor}^{\lceil f_{q}(b)-f_{p}(b)\rceil}\frac{2s}{N(f_{n}-f_{m})'(c_{M,q,p})}=\O_{s,a,b}\left(1\right).
\end{equation*}
Which when combined with \eqref{N error} gives
\begin{equation}
\label{bound2}
\sum_{\substack{1\leq m< n\leq N\\ 1\leq p< q\leq N\\ q< n}}\sum_{M=\lfloor f_{q}(a)-f_{p}(a)\rfloor}^{\lceil f_{q}(b)-f_{p}(b)\rceil}\frac{2s}{N(f_{n}-f_{m})'(c_{M,q,p})}=\O_{s,a,b}\left(N\right).
\end{equation}
Substituting \eqref{bound 1} and \eqref{bound2} into \eqref{second split} we obtain the desired inequality:
\begin{equation*}
\frac{8}{N^2}\sum_{\substack{1\leq m< n\leq N\\ 1\leq p< q\leq N\\ q< n}}\int_{a}^{b}\chi_{[0,\frac{s}{N}]}(\|f_{n}(\alpha)-f_{m}(\alpha)\|)\chi_{[0,\frac{s}{N}]}(\|f_{q}(\alpha)-f_{p}(\alpha)\|)\, d\alpha\leq 4s^2 (b-a)+\O_{s,a,b}\left(\frac{1}{N}\right).
\end{equation*}
\end{proof}
\begin{lemma}
	\label{lemma2}
	Suppose $(f_n)_{n=1}^{\infty}$ is a sequence of polynomials satisfying the hypothesis of Theorem \ref{Main thm}. Then for any $[a,b]\subset(1,\infty)$ and $s>0$ we have
	$$\frac{8}{N^2}\sum_{\substack{1\leq m<p< n\leq N}}\int_{a}^{b}\chi_{[0,\frac{s}{N}]}(\|f_{n}(\alpha)-f_{m}(\alpha)\|)\chi_{[0,\frac{s}{N}]}(\|f_{n}(\alpha)-f_{p}(\alpha)\|)\, d\alpha=\O_{s,a,b}\left(\frac{1}{N}\right)$$
\end{lemma}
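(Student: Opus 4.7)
The proof follows the template of Lemma \ref{lemma1}, but begins with a crucial triangle-inequality observation. If $\alpha \in [a,b]$ satisfies both $\|f_n(\alpha) - f_m(\alpha)\| \leq s/N$ and $\|f_n(\alpha) - f_p(\alpha)\| \leq s/N$, then $\|f_p(\alpha) - f_m(\alpha)\| \leq 2s/N$. Consequently the integrand for each triple $m<p<n$ is supported on $B_{m,p} := \{\alpha \in [a,b] : \|f_p(\alpha) - f_m(\alpha)\| \leq 2s/N\}$. The point is that this reduction replaces the two degree-$d_n$ polynomials with the single polynomial $f_p - f_m$ of the smaller degree $d_p$, and it is this smaller degree that will make the forthcoming series estimates converge via assumption $(5)$.

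Since $f_p - f_m$ is strictly increasing and convex by assumption $(2)$, decompose $B_{m,p}$ as the disjoint union of the maximal intervals
\begin{equation*}
J_{K,p,m} := \left\{\alpha \in [a,b] : f_p(\alpha) - f_m(\alpha) \in [K - \tfrac{2s}{N}, K + \tfrac{2s}{N}]\right\}
\end{equation*}
over integers $K \in [\lfloor f_p(a) - f_m(a)\rfloor, \lceil f_p(b) - f_m(b)\rceil]$, and note that Lemma \ref{convexity lemma} applied to $f_p - f_m$ gives $\sum_K \L(J_{K,p,m}) = \L(B_{m,p}) = \O_{s,a,b}(1/N)$. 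On each $J_{K,p,m}$ apply Lemma \ref{convexity lemma} to $f_n - f_p$ (which is strictly increasing and convex by assumption $(2)$) to bound the measure of the subset on which $\|f_n(\alpha) - f_p(\alpha)\| \leq s/N$. Summing over $K$ yields a main term bounded by $\frac{(2s/N)\L(B_{m,p})}{1 - 2s/N} = \O_{s,a,b}(1/N^2)$ per triple, plus an error term of the form $\O(\sum_K (2s/N)/(f_n - f_p)'(c_{K,p,m}))$, where $c_{K,p,m}$ denotes the left endpoint of $J_{K,p,m}$.

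The main term summed over the $\binom{N}{3}$ triples contributes $\O_{s,a,b}(N)$, which after the prefactor $8/N^2$ becomes the claimed $\O_{s,a,b}(1/N)$. For the error term, assumption $(4)$ gives $c_{K,p,m}^{d_p} \geq (K - 2s/N)/C_{a,b}$, and assumption $(3)$, applied on a fixed slightly enlarged interval exactly as in Lemma \ref{lemma1}, yields $(f_n - f_p)'(c_{K,p,m}) \geq c'_{a,b}\, d_n ((K - 2s/N)/C_{a,b})^{d_n/d_p}$. Because $d_n/d_p > 1$ the tail sum $\sum_K K^{-d_n/d_p}$ is controlled by its initial term near $K_0 \asymp a^{d_p}/C_{a,b}$, producing
\begin{equation*}
\sum_K \frac{1}{(f_n-f_p)'(c_{K,p,m})} = \O\!\left(\frac{C_{a,b}^{2d_n/d_p - 1}\, a^{d_p - d_n}}{d_n(d_n/d_p - 1)}\right),
\end{equation*}
which by assumption $(5)$ is at most $1/n^3$ as soon as $p$ exceeds a fixed threshold $N_1$. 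The error per such triple is therefore $\O_{s,a,b}(1/(N n^3))$; summing over triples and multiplying by the prefactor gives $\O_{s,a,b}(\log N/N^3)$, which is negligible. The finitely many triples with $p < N_1$ contribute only $\O_{s,a,b}(1/N^3)$ by a crude bound, just as in the second half of the proof of Lemma \ref{lemma1}.

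The principal obstacle is choosing the right decomposition: decomposing directly based on $f_n - f_m$ or $f_n - f_p$ would lead to a series with exponent $1$ (a logarithmic sum) rather than $\sum_K K^{-d_n/d_p}$, producing a bound of order $1$ per triple that cannot be beaten by the prefactor $8/N^2$ against the $\binom{N}{3}$ count of triples. Using the triangle inequality to replace the two higher-degree functions by the single lower-degree one $f_p - f_m$ before decomposing is what secures the convergent exponent $d_n/d_p > 1$ and allows assumption $(5)$ to finish the argument.
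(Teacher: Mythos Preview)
Your proof is correct and takes essentially the same approach as the paper: use the triangle inequality to replace $\chi_{[0,s/N]}(\|f_n-f_m\|)\chi_{[0,s/N]}(\|f_n-f_p\|)$ by $\chi_{[0,2s/N]}(\|f_p-f_m\|)\chi_{[0,s/N]}(\|f_n-f_p\|)$, then decompose on the lower-degree polynomial $f_p-f_m$ and rerun the machinery of Lemma~\ref{lemma1}. The paper in fact only states the reduction and asserts that the rest goes through verbatim; your write-up supplies the details of that repetition, and they are accurate.
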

\begin{proof}
Notice that if $\chi_{[0,\frac{s}{N}]}(\|f_{n}(\alpha)-f_{m}(\alpha)\|)=1$ and $\chi_{[0,\frac{s}{N}]}(\|f_{n}(\alpha)-f_{p}(\alpha)\|)=1$ then $\chi_{[0,\frac{2s}{N}]}(\|f_{p}(\alpha)-f_{m}(\alpha)\|)=1.$ Therefore 
\begin{align}
\label{bound3}&\sum_{\substack{1\leq m<p< n\leq N}}\int_{a}^{b}\chi_{[0,\frac{s}{N}]}(\|f_{n}(\alpha)-f_{m}(\alpha)\|)\chi_{[0,\frac{s}{N}]}(\|f_{n}(\alpha)-f_{p}(\alpha)\|)\, d\alpha\\
\leq &\sum_{\substack{1\leq m<p< n\leq N}}\int_{a}^{b}\chi_{[0,\frac{2s}{N}]}(\|f_{p}(\alpha)-f_{m}(\alpha)\|)\chi_{[0,\frac{s}{N}]}(\|f_{n}(\alpha)-f_{p}(\alpha)\|)\, d\alpha.\nonumber
\end{align} Importantly $f_p-f_m$ and $f_n - f_p$ are polynomials of different degrees. As a consequence of this, the right hand side of \eqref{bound3} is in a form where the arguments used in the proof of Lemma \ref{lemma1} can be applied. In particular one can define appropriate analogues of the intervals $I_{M,q,p}$ and the points $c_{M,q,p}$. Then by analogous arguments to those given in the proof of Lemma \ref{lemma1}, it can be shown that 
\begin{equation}
\label{bound4}\sum_{\substack{1\leq m<p< n\leq N}}\int_{a}^{b}\chi_{[0,\frac{2s}{N}]}(\|f_{p}(\alpha)-f_{m}(\alpha)\|)\chi_{[0,\frac{s}{N}]}(\|f_{n}(\alpha)-f_{p}(\alpha)\|)\, d\alpha=\O_{s,a,b}\left(N\right).
\end{equation}Substituting \eqref{bound4} into \eqref{bound3} we obtain
\begin{equation*}
\frac{8}{N^2}\sum_{\substack{1\leq m<p< n\leq N}}\int_{a}^{b}\chi_{[0,\frac{s}{N}]}(\|f_{n}(\alpha)-f_{m}(\alpha)\|)\chi_{[0,\frac{s}{N}]}(\|f_{n}(\alpha)-f_{p}(\alpha)\|)\, d\alpha=\O_{s,a,b}\left(\frac{1}{N}\right).
\end{equation*}


\end{proof}
Substituting the bounds provided by Lemma \ref{lemma1} and Lemma \ref{lemma2} into \eqref{double split}, we see that under the hypothesis of Theorem \ref{Main thm}, the following holds for the first term in \eqref{split}
\begin{equation}
\label{first term bound}\frac{1}{N^2}\sum_{\substack{1\leq m\neq n\leq N\\ 1\leq p\neq q\leq N\\(m,n)\neq (p,q)}}\int_{a}^{b}\chi_{[0,\frac{s}{N}]}(\|f_{n}(\alpha)-f_{m}(\alpha)\|)\chi_{[0,\frac{s}{N}]}(\|f_{q}(\alpha)-f_{p}(\alpha)\|)\, d\alpha\leq 4s^2(b-a)+\O_{s,a,b}\left(\frac{1}{N}\right).
\end{equation}

\begin{lemma}
	\label{lemmab}
	Suppose $(f_n)_{n=1}^{\infty}$ is a sequence of polynomials satisfying the hypothesis of Theorem \ref{Main thm}. Then for any $[a,b]\subset(1,\infty)$ and $s>0$ we have $$\frac{1}{N^2}\sum_{1\leq m\neq n\leq N}\int_{a}^{b}\chi_{[0,\frac{s}{N}]}(\|f_{n}(\alpha)-f_{m}(\alpha)\|)\, d\alpha=\O_{s,a,b}\left(\frac{1}{N}\right)$$ and $$\frac{4s}{N}\sum_{1\leq m\neq n\leq N}\int_{a}^{b}\chi_{[0,\frac{s}{N}]}(\|f_{n}(\alpha)-f_{m}(\alpha)\|)\, d\alpha\geq 8s^2(b-a)+\O_{s,a,b}\left(\frac{1}{N}\right).$$
\end{lemma}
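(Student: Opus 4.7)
The plan is to apply Lemma~\ref{convexity lemma} directly to each integral and then tally the main and error terms. Fix $1 \leq m < n \leq N$; by assumption (2) the function $g := f_n - f_m$ is strictly increasing and convex on $[a,b]$, and the condition $\|g(\alpha)\| \leq s/N$ is equivalent to $\{g(\alpha)\} \in I$, where $I = [1 - s/N, 1] \cup [0, s/N]$ has the form covered by the second case of Lemma~\ref{convexity lemma} and satisfies $\L(I) = 2s/N$. Since the integrand is symmetric in $(m,n)$, the sum over $m \neq n$ equals twice the sum over $m < n$.

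For the upper bound (first claim), Lemma~\ref{convexity lemma} gives
$$\int_a^b \chi_{[0, s/N]}(\|f_n(\alpha) - f_m(\alpha)\|)\, d\alpha \leq \frac{(b-a)(2s/N)}{1 - 2s/N} + \O\left(\frac{2s/N}{(f_n - f_m)'(a)}\right).$$
The main term summed over all $N(N-1)$ ordered pairs contributes $2s(b-a)N + \O_s(1)$ after expanding $(1 - 2s/N)^{-1}$. For the error we use assumption (3) together with the fact that assumption (1) forces $d_n \geq n$, so $(f_n - f_m)'(a) \geq c_{a,b} d_n a^{d_n} \geq c_{a,b} n a^n$; hence
$$\sum_{1 \leq m \neq n \leq N} \frac{1}{(f_n - f_m)'(a)} \leq (N-1)\sum_{n=1}^{N}\frac{1}{c_{a,b}\, n a^n} = \O_{a,b}(N),$$
since the series $\sum 1/(n a^n)$ converges for $a > 1$. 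Multiplying by the $2s/N$ prefactor shows the total error is $\O_{s,a,b}(1)$, and dividing the whole sum by $N^2$ yields the claimed bound $\O_{s,a,b}(1/N)$.

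For the lower bound (second claim), the same approach using the lower half of Lemma~\ref{convexity lemma} yields
$$\sum_{1\leq m\neq n\leq N}\int_a^b \chi_{[0, s/N]}(\|f_n(\alpha) - f_m(\alpha)\|)\, d\alpha \geq \frac{N(N-1)(b-a)(2s/N)}{1 + 2s/N} - \O_{s,a,b}(1) = 2s(b-a) N + \O_{s,a,b}(1).$$
Multiplying by $4s/N$ produces $8s^2(b-a) + \O_{s,a,b}(1/N)$, as required. I do not expect any serious obstacle: this is essentially a bookkeeping exercise, and the only delicate point is verifying that assumption (3), combined with $d_n \geq n$, provides enough decay in $(f_n - f_m)'(a)$ to make the error terms from Lemma~\ref{convexity lemma} sum to $\O_{s,a,b}(1)$.
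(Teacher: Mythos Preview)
Your proposal is correct and follows essentially the same route as the paper: reduce by symmetry to $m<n$, apply Lemma~\ref{convexity lemma} with $I=[1-s/N,1]\cup[0,s/N]$, and control the error terms via assumption~(3) together with $d_n\ge n$. The only cosmetic difference is that the paper bounds $\sum_{m<n}\frac{1}{N d_n a^{d_n}}$ more tightly (by $\O_{a,b}(1/N)$ rather than your $\O_{a,b}(1)$), but your looser estimate is already sufficient for both claims.
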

\begin{proof}
	To prove our result it suffices to show that 
	\begin{equation}
	\label{upper bound}
	\sum_{1\leq m\neq n\leq N}\int_{a}^{b}\chi_{[0,\frac{s}{N}]}(\|f_{n}(\alpha)-f_{m}(\alpha)\|)\, d\alpha=\O_{s,a,b}(N)
	\end{equation}
	 and 
	 \begin{equation}
	 \label{lower bound}\sum_{1\leq m\neq n\leq N}\int_{a}^{b}\chi_{[0,\frac{s}{N}]}(\|f_{n}(\alpha)-f_{m}(\alpha)\|)\, d\alpha\geq 2s(b-a)N+\O_{s,a,b}(1).
	 \end{equation}
We start by proving \eqref{upper bound}. Applying Lemma \ref{convexity lemma} together with our first and third assumptions, we see that the following holds:
\begin{align*}
	\sum_{1\leq m\neq n\leq N}\int_{a}^{b}\chi_{[0,\frac{s}{N}]}(\|f_{n}(\alpha)-f_{m}(\alpha)\|)\, d\alpha&=2\sum_{1\leq m< n\leq N}\int_{a}^{b}\chi_{[0,\frac{s}{N}]}(\|f_{n}(\alpha)-f_{m}(\alpha)\|)\, d\alpha\\ 
	&\leq 2\sum_{1\leq m< n\leq N}\left(\frac{(b-a)2s/N}{1-2s/N}+\O_{s,a,b}\left(\frac{1}{Nd_na^{d_n}}\right)\right)\\
	&= 2\sum_{1\leq m< n\leq N}\frac{(b-a)2s/N}{1-2s/N}+\O_{s,a,b}\left(\sum_{1\leq m< n\leq N}\frac{1}{Nd_na^{d_n}}\right)\\
	&\leq \frac{4s(b-a)}{N-2s}\sum_{1\leq m< n\leq N}1+ \O_{s,a,b}\left(\frac{1}{N}\right)\\
	&= \frac{4s(b-a)}{N-2s}\left(\frac{N^2}{2}+\O(N)\right)+\O_{s,a,b}\left(\frac{1}{N}\right)\\
	&=\O_{s,a,b}(N).
\end{align*}By a similar argument, this time using the lower bound from Lemma \ref{convexity lemma}, it can be shown that
\begin{align*}\sum_{1\leq m\neq n\leq N}\int_{a}^{b}\chi_{[0,\frac{s}{N}]}(\|f_{n}(\alpha)-f_{m}(\alpha)\|)\geq 2s(b-a)N+\O_{s,a,b}(1).
\end{align*}

\end{proof}
 
\begin{proof}[Proof of Proposition \ref{error prop}]
Proposition \ref{error prop} follows by substituting the bounds provided by \eqref{first term bound} and Lemma \ref{lemmab} into \eqref{split}.
\end{proof}

Equipped with Proposition \ref{error prop} we are now in a position to prove Theorem \ref{Main thm}.

\begin{proof}[Proof of Theorem \ref{Main thm}]
Let us start by fixing $[a,b]\subset (1,\infty)$ and let $s>0$ be arbitrary. By Proposition \ref{error prop} we know that $$\int_{a}^{b}\left(\frac{\#\{1\leq m\neq n\leq N^2: \|f_{n}(\alpha)-f_{m}(\alpha)\|\leq \frac{s}{N^2}\}}{N^2}-2s\right)^2 d\alpha=\O_{s,a,b}\left(\frac{1}{N^2}\right).$$ Applying Markov's inequality, we have
$$\L\left(\alpha\in[a,b]:\left(\frac{\#\{1\leq m\neq n\leq N^2: \|f_{n}(\alpha)-f_{m}(\alpha)\|\leq \frac{s}{N^2}\}}{N^2}-2s\right)^2>N^{-1/2}\right)= \O_{s,a,b}\left(\frac{1}{N^{3/2}}\right).$$ Importantly $$\sum_{N=1}^{\infty}\frac{1}{ N^{3/2}}<\infty.$$ Therefore, by the Borel-Cantelli lemma, for Lebesgue almost every $\alpha\in[a,b],$ the inequality $$\left(\frac{\#\{1\leq m\neq n\leq N^2: \|f_{n}(\alpha)-f_{m}(\alpha)\|\leq \frac{s}{N^2}\}}{N^2}-2s\right)^2>N^{-1/2}$$ is satisfied for at most finitely many values of $N$. This implies that for Lebesgue almost every $\alpha\in[a,b]$
we have $$\lim_{N\to\infty}\frac{\#\{1\leq m\neq n\leq N^2: \|f_{n}(\alpha)-f_{m}(\alpha)\|\leq \frac{s}{N^2}\}}{N^2}=2s.$$ The parameter $s$ was arbitrary. Therefore by considering a countable dense set of $s$, and applying an approximation argument, it can be shown that for Lebesgue almost every $\alpha\in[a,b]$, for any $s>0$ we have \begin{equation}
\label{square convergence}
\lim_{N\to\infty}\frac{\#\{1\leq m\neq n\leq N^2: \|f_{n}(\alpha)-f_{m}(\alpha)\|\leq \frac{s}{N^2}\}}{N^2}=2s.
\end{equation} 

To each $N\in\mathbb{N}$ we define $M_N\in\mathbb{N}$ to be the unique integer satisfying the inequalities $$M_{N}^2\leq N< (M_N +1)^2.$$ Let $s>0$ and $\epsilon>0$ be arbitrary. Since \eqref{square convergence} holds for Lebesgue almost every $\alpha\in[a,b]$ for any $s>0$, we have
\begin{align*}
&\limsup_{N\to\infty}\frac{\#\{1\leq m\neq n\leq N: \|f_{n}(\alpha)-f_{m}(\alpha)\|\leq \frac{s}{N}\}}{N}\\
\leq &\limsup_{N\to\infty}\frac{\#\{1\leq m\neq n\leq (M_N+1)^2: \|f_{n}(\alpha)-f_{m}(\alpha)\|\leq \frac{s+\epsilon}{(M_N +1)^2}\}}{M_N^2}\\
=&\limsup_{N\to\infty}\left(\frac{(M_N+1)^2}{M_N^2}\right)\frac{\#\{1\leq m\neq n\leq (M_N+1)^2: \|f_{n}(\alpha)-f_{m}(\alpha)\|\leq \frac{s+\epsilon}{(M_N +1)^2}\}}{(M_N+1)^2}\\
=&2(s+\epsilon)
\end{align*}for Lebesgue almost every $\alpha\in[a,b]$. Similarly it can be shown that for Lebesgue almost every $\alpha\in[a,b],$ we have $$\liminf_{N\to\infty}\frac{\#\{1\leq m\neq n\leq N: \|f_{n}(\alpha)-f_{m}(\alpha)\|\leq \frac{s}{N}\}}{N}\geq 2(s-\epsilon).$$ Since $s$ and $\epsilon$ were arbitrary, we may conclude that for Lebesgue almost every $\alpha\in[a,b]$ we have 
$$\lim_{N\to\infty}\frac{\#\{1\leq m\neq n\leq N: \|f_{n}(\alpha)-f_{m}(\alpha)\|\leq \frac{s}{N}\}}{N}= 2s$$ for any $s>0$. Since the interval $[a,b]$ was arbitrary this completes our proof.

\end{proof}

\noindent \textbf{Acknowledgements.} The author was supported by EPSRC grant EP/M001903/1.

\end{document}